\documentclass[11pt,a4paper]{article}
\usepackage[margin=1in]{geometry}
\usepackage{amsmath,amssymb,amsthm,mathtools}
\usepackage{booktabs,tabularx,array}
\usepackage{microtype}
\usepackage{xcolor}
\usepackage{tikz}
\usepackage[hidelinks]{hyperref}
\numberwithin{equation}{section}
\usepackage[round]{natbib}

\newtheorem{theorem}{Theorem}[section]
\newtheorem{lemma}[theorem]{Lemma}
\newtheorem{proposition}[theorem]{Proposition}

\newtheorem{conjecture}[theorem]{Conjecture}
\theoremstyle{definition}
\newtheorem{definition}[theorem]{Definition}

\theoremstyle{remark}

\newcommand{\Rev}{\mathsf{Rev}}

\newcommand{\cB}{\mathcal{B}}

\newcommand{\PALM}{\mathrm{PALM}}
\newcommand{\dd}{\,\mathrm{d}}

\usepackage{setspace}
\definecolor{deepblue}{RGB}{31,78,121}
\definecolor{deepgreen}{RGB}{35,105,82}
\hypersetup{%
 colorlinks=true,
 linkcolor=deepblue,
 urlcolor=deepblue,
 citecolor=deepgreen,
}

\title{Assortment optimization under the perception-adjusted Luce model}
\author{Gerardo Berbeglia\thanks{Melbourne Business School,
                                 The University of Melbourne, Australia.
                                 E-mail:~\href{mailto:g.berbeglia@mbs.edu}{g.berbeglia@mbs.edu}}
        \and
        Mashbat Suzuki\thanks{School of Computer Science \& Engineering,
                              UNSW Sydney, Australia.
                              E-mail:~\href{mailto:mashbats@gmail.com}{mashbats@gmail.com}}
        \and 
        Matthew K.\ Tam\thanks{School of Mathematics \& Statistics,
                               The University of Melbourne, Australia.
                               E-mail:~\href{mailto:matthew.tam@unimelb.edu.au}{matthew.tam@unimelb.edu.au}}
        }
\date{\today}

\begin{document}
\maketitle

\begin{abstract}
The Perception-Adjusted Luce Model (PALM) introduced by \cite{echenique2018perception} is a sequential choice model that can accommodate violations of regularity and choice
overload, behaviors not captured by the utility-maximization framework. \cite{flores2019assortment} studied the assortment problem under the PALM and proved that, for the case of two levels, there is always an optimal solution that is revenue-ordered by level. They also conjectured that revenue-ordered by levels remain optimal for the general PALM with any number of levels. We resolve this question in the affirmative by showing that every PALM instance admits an optimal assortment that includes, within each level, all products at or above a level-specific revenue threshold. The proof involves first showing how the conjecture which deals with a discrete optimization problem, can be reduced to proving certain property of a related continuous optimization problem. Specifically, we study a continuous refinement of the assortment decision under the same PALM choice rule. Although the refinement permits partial utility selections, we prove it always has an optimum that selects complete revenue tiers. This allows us to show that the assortment problem is polynomial-time solvable when the number of levels is fixed.
\\ 

\textbf{Keywords}: assortment optimization, revenue management, continuous optimization, revenue-ordered assortments.
\end{abstract}

\section{Introduction}
In the field of revenue management, the assortment problem consists of choosing what subset of products to show to customers in order to maximize the expected revenue. This problem has multiple applications in a diverse set of industries including airlines, retail, accommodation, and hospitality. The difficulty of solving a given assortment problem is typically dependent on the way customer behavior is modeled. For some consumer choice models such as the multinomial logit model (MNL), the random consideration set choice model or the Markov chain choice model, the corresponding assortment problem can be solved efficiently \citep{talluri2004revenue, gallego2024random, blanchet2013markov}. For other choice models such as the latent class multinomial logit (LC-MNL) model, the exponomial choice model or the ranked-list choice model, the assortment problem is NP-hard \citep{bront2009column,lu2026nphardness, aouad2015approximability}.

Most assortment studies use a random utility framework in which consumers maximize
utilities drawn from a menu-independent distribution. These models, known as \textit{random utility models} (RUMs), satisfy a property called \textit{regularity} \citep{block1960random}: the probability of choosing an existing product cannot increase when another product is added to the offer set. However, substantial empirical evidence indicates that consumers often violate regularity and, consequently, that their behavior cannot always be explained by a RUM. One prominent example is the \textit{decoy effect} or \textit{attraction effect}, whereby adding an inferior alternative that is asymmetrically dominated by one of the existing products increases the probability that consumers choose the dominating product \citep{huber1982adding}. A related phenomenon is the \textit{compromise effect}: when a new product with extreme attribute values is introduced, an existing product may become the intermediate or “compromise” option and therefore more attractive \citep{simonson1989choice}. In both cases, the addition of an option can increase the choice probability of an existing product, violating regularity. A further departure from standard rational-choice predictions is the \textit{paradox of choice} or \textit{choice overload}, where the increase of the number of available alternatives can increase the likelihood that consumers postpone the decision or make no purchase at all \citep{iyengar2000choice,long2025choice}.

In the last 15 years, there has been an increased interest in studying consumer choice models that go beyond the RUM framework as a way to potentially explain and predict consumer choice behaviors such as the attraction effect and choice overload. One such model that has attracted considerable attention in the economics literature is the \textit{perception adjusted Luce model} (PALM) proposed by \cite{echenique2018perception}. Under the PALM, consumers make choices via a finite sequential process following a perception priority order which permits violations of regularity and choice overload. \cite{flores2019assortment} considered the special case of the PALM where there are only two stages (or priorities) and proved that the assortment problem is polynomial time solvable. Specifically, the authors proved that there is always an optimal solution to the assortment problem that is revenue-ordered at each of the two stages (i.e. in each level if a product with revenue $r$ is offered, every product with revenue of at least $r$ is also offered). They also conjectured that the revenue-ordered by level structure is optimal for the general PALM (with any number of stages).

In this paper we resolve this conjecture: for every finite number of perception levels, assortment optimisation under PALM admits an optimal assortment
that is revenue-ordered by level.  Our proof considers a continuous refinement
in which the firm may select part of each product's utility while keeping
revenues and perception levels fixed.  Conditional on the total utility
selected from each level, the best allocation selects utility in nonincreasing
revenue order.  The central step is to prove that the level totals can be
jointly optimised at revenue-tier boundaries.  At these boundaries, the refined
decision selects complete tiers and therefore corresponds to an ordinary
assortment.  Enumerating the resulting level-specific thresholds gives a
polynomial-time algorithm when the number of perception levels is fixed.

\subsection{Related literature}
 To position our paper in the broader literature, we review some of the most related choice models and assortment optimization results. We begin with a review of results under choice models that, like PALM, are based on the MNL model but unlike the PALM, fall within the class of choice models based on random utility. We then proceed to review the main choice models and assortment optimization results for models, like the PALM, fall outside the RUM class.

\paragraph{Related results: RUM based choice models}
The assortment optimization problem has already been studied for multiple choice models that are related to the PALM. One of the simplest heuristics to solve the assortment problem is to restrict the possible assortments to those that can be constructed as follows: choose a revenue threshold $r$ and then select all the products whose revenue is at least $r$. The revenue-ordered assortment heuristic consists of evaluating the expected revenue of only those $O(n)$ assortments (where $n$ denotes the number of products) and return the assortment that yields the highest expected revenue. Although this heuristic is not guaranteed to produce an optimal solution, it has been shown that it is optimal for one of the most studied choice model known as the Luce or multinomial choice model (MNL) \citep{luce1959,talluri2006theory}.

There are many choice models that extend the MNL as the PALM. One extension considers that consumers are divided into $k\geq 2 $ classes and each class behaves as a different MNL model. Under this model, known as the latent class MNL, revenue ordered assortments do not always yield an optimal solution to the assortment optimization problem. Moreover, it is shown that the problem is NP-hard \citep{bront2009column,rusmevichientong2014assortment}. \cite{blanchet2013markov} proposed a Markov chain choice model which extends the MNL choice model and showed that the assortment optimization can be solved efficiently. However, \cite{desir2020constrained} proved the problem becomes NP-hard when the assortment is required to satisfy some maximum cardinality requirement.
There has been a recent interest in the study of the assortment optimization problem for MNL based sequential choice models but unlike the PALM, they belong to the RUM class. \cite{liu2020assortment}  studied the assortment problem for a sequential choice model that extends the MNL called the multinomial logit model under sequential offerings. Unlike the PALM in which the perception levels are fixed, their model allows the firm to optimize the stage at which consumers are allowed to observe each product. Another difference with the PALM is that their model cannot explain consumer behaviors that falls outside the RUM class such as choice overload or the attraction effect. The authors showed that the assortment problem is NP-hard. Later, \cite{feldman2022multinomial} showed the existence of a PTAS for the assortment problem under the same model. \cite{gao2021assortment} considers a similar RUM model as in \cite{liu2020assortment} in which the utility of the outside option; rather than being re-sampled at each stage, it is sampled once at the beginning of the process. The authors proved that the assortment problem can be solved in polynomial time. \cite{wang2026assortment} and \cite{xu2023assortment} extend these sequential choice models by allowing customers to buy multiple products. In a related dynamic setting, \cite{tang2023performance} study multi-period assortment optimization under an MNL model in which product attractiveness depends on cumulative past sales.

\paragraph{Related results: choice models outside the RUM framework}
\cite{tversky1974judgment} and \cite{tversky1981framing} provide some of the earliest empirical evidence that individuals sometimes violate regularity and therefore choice models based on the random utility maximization (RUM) framework may fail to explain choice behavior. \cite{wernerfelt1995rational} shows how consumers can use the composition of market offerings to
infer product utilities, providing a rational explanation of the compromise effect. \cite{kamenica2008contextual} considers a model in which the consumers are uninformed and can infer the value they get from an alternative depending on the complete offer set and showed that the model can explain RUM violations. The model we consider in this paper was introduced by \cite{echenique2018perception} under the name of Perception Adjusted Luce Model (PALM). Under the PALM, each product has a predetermined perception level in the integer range $[k]=\{1,2,\hdots,k\}$. Consumer selection process occurs via a sequential mechanism and at each stage $i\in[k]$, the consumer probability to choose an item of perception level $i$ follows the multinomial logit formula taking into account all the products that are available in all levels. The authors provide a clean characterization of what choice models can be explained by the PALM and show that the model can explain violations to regularity and choice overload.  \cite{flores2019assortment} considers the sequential multinomial choice model (SMNL) in which products are divided into two categories and consumers follow the same sequential processes defined under the PALM. Under the SMNL, which is generalization of the MNL, \cite{flores2019assortment} proved that there is always an optimal solution that is revenue-ordered by level. The authors left the corresponding arbitrary-level statement for the PALM as an open conjecture.

\cite{echenique2019general} and \cite{ahumada2018luce} have independently proposed a model known as the General Luce model (GLM) in which consumers discard some products using consideration sets as a filter prior to selecting an alternative based on the MNL formula. The GLM allows violations to regularity. 
\cite{kleinberg2017comparison} proposed a choice model where each consumer has a ranking of all products and chooses the $\ell^{th}$ alternative from that ranking among those being offered. The authors prove that this model can incorporate violations of regularity as long as $k>1$. \cite{berbeglia2018generalized} extends this model by considering a consumer is sampled from a probability distribution among all possible rankings and integer values for $\ell$. The authors show that the resulting model can have better predictions than RUM models in some real datasets.

\subsection{Paper outline and proof strategy}
To establish the conjecture posed by \cite{flores2019assortment}, our proof strategy is as follows. In Section~\ref{sec: PALM+Refinement}, we first consider a refined version of the assortment problem in which the utility of each product can be chosen continuously, rather than requiring products to be either fully included or excluded. Without loss of optimality, we show that products within each level can be selected in decreasing order of revenue. This observation allows us to reformulate the refined problem in terms of the total utility allocated to each level, substantially reducing its complexity. As a result, we show that the conjecture would hold if the refined problem has an optimal solution whose allocated total utility at each level $i$ matches the total utility of the $\alpha_i$ products with highest revenue in level $i$ for some $\alpha_i \geq 0$. In Section~\ref{sec:model}, we then exploit the structure of the resulting objective function of the refined problem. One of the key insights in the proof is that, for the refined problem, every positive-valued local maximizer with at least two non-zero coordinates is necessarily utility-integral.  Finally, we provide some concluding remarks and some future research directions in Section~\ref{sec:conclusion}.

\section{Assortment Optimization under the Perception-Adjusted Luce Model}\label{sec: PALM+Refinement}

In this section, we formally introduce the Perception-Adjusted Luce Model (PALM), and formulate the traditional and refined assortment optimization under it. By analyzing the structure of the latter, we show that if an optimum of the refined assortment problem satisfies a property we call utility integral then there is an optimal solution to the traditional assortment optimization problem that is revenue-ordered-by-level.

\subsection{Traditional assortment optimization under the PALM }

The perception-adjusted Luce model (PALM), introduced by
\cite{echenique2018perception}, considers a finite product set $X$ partitioned
into ordered perception levels $X_1,\ldots,X_k$. Products in the same
level have equal priority, and lower-indexed levels are considered first.
This is equivalent to specifying a weak perception order. Each product
$g$ has utility $u_g>0$ and revenue $r_g\ge0$. The outside option represents
no purchase and has utility $z\ge0$ and revenue zero. 

For an assortment $S\subseteq X$, let $x=(x_1,\ldots,x_k)$ denote its associated vector, where
\begin{equation}\label{eq:x}
 x_i=\sum_{g\in S\cap X_i}u_g,
 \end{equation}
 and let \begin{equation}\label{eq:D}
 D(z,x)=z+\sum_{i=1}^k x_i.
 \end{equation}
 
Thus $x_i$ is the sum of the utility of the products from level $i$ that belong to $S$, and $D(z,x)$ consists of the sum of the utility of the products in $S$ plus the utility of the outside option.

\begin{definition}[PALM]\label{mod:PALM}
Given an assortment $S\subseteq X$, a consumer visits the levels in order. Conditional on reaching
level $i$, the consumer selects each product $g\in S\cap X_i$ with
probability $u_g/D(z,x)$, and move to the next level without purchasing with
probability $1-x_i/D(z,x)$. The unconditional probability of selecting product $g \in X_i$ is
\begin{equation}\label{rho_def}
 \mathcal{P}(g,S)= \frac{u_g}{D(z,x)}\prod_{h<i}\left(1-\frac{x_h}{D(z,x)}\right),
 \qquad g\in S\cap X_i.
\end{equation}
The probability of no purchase is $\prod_{i=1}^k(1-x_i/D(z,x))$.
For an empty assortment, no purchase occurs with probability one,
including when $z=0$.
\end{definition}

The denominator in PALM includes the entire assortment and remains the same at
every stage. With one nonempty level, PALM reduces to the Luce model;
with several nonempty levels, no purchase can have positive probability
even when $z=0$. For $D(z,x)>0$, the expected revenue obtained by offering the assortment $S$ whose associated vector is $x$ becomes
\begin{equation}\label{eq:ordinary-palm-revenue}
 \Rev^{\PALM}(S)
 =\sum_{i=1}^k
  \frac{\sum_{g\in S\cap X_i}u_gr_g}{D(z,x)}
  \prod_{h<i}\left(1-\frac{x_h}{D(z,x)}\right).
\end{equation}
The \textit{traditional assortment optimization problem} (or assortment problem in short) consists of finding an assortment $S \subseteq X$ that maximizes $\Rev^{\PALM}(S)$.

\begin{definition}[Revenue-ordered by level]\label{def:revenue-ordered-by-level}
An assortment $S\subseteq X$ is \emph{revenue-ordered by level} if, for
each level $i$, the inclusion of a product $g\in S\cap X_i$ in $S$ implies the inclusion of every product $h\in X_i$ with $r_h\ge r_g$. Equivalently, for each level $i$,
there is a threshold $\tau_i\in\{r_g:g\in X_i\}\cup\{+\infty\}$ such that
\[
 S\cap X_i=\{g\in X_i:r_g\ge\tau_i\}.
\]
\end{definition}

\citet{flores2019assortment} proved the following result for the case of a PALM with two levels and left open the conjecture that follows.

\begin{theorem}[Two-level SML \citep{flores2019assortment}]\label{thm:flores-two-level}
If $k=2$, the assortment problem under PALM has an optimal solution that is
revenue-ordered by level. 
\end{theorem}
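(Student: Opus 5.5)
We prove the two-level case directly by an exchange argument. The plan is to fix the total utility selected in each level and show that, among assortments with the same level totals, the revenue is maximised by putting the utility on high-revenue products.

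For $k=2$ and $S$ with associated vector $x=(x_1,x_2)$, write $D=z+x_1+x_2$, $A_1=\sum_{g\in S\cap X_1}u_gr_g$ and $A_2=\sum_{g\in S\cap X_2}u_gr_g$. Then \eqref{eq:ordinary-palm-revenue} becomes
\[
\Rev^{\PALM}(S)=\frac{A_1}{D}+\frac{A_2}{D}\Bigl(1-\frac{x_1}{D}\Bigr)=\frac{A_1 D + A_2(z+x_2)}{D^2}.
\]
For fixed $(x_1,x_2)$ this is increasing and linear in $(A_1,A_2)$. So an optimal $S$ should make each $A_i$ as large as possible given $x_i$.

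The key step passes to a continuous relaxation. Within level $i$, sort the products by nonincreasing revenue. Let $\Phi_i(t)$ be the largest value of $\sum_g y_g r_g$ subject to $0\le y_g\le u_g$ and $\sum_g y_g=t$. By a greedy argument, $\Phi_i$ is the concave piecewise-linear function obtained by filling utility in revenue order. Its breakpoints are exactly the totals $t=U_i(\alpha)$ of the top-$\alpha$ products, and $\Phi_i(U_i(\alpha))$ is attained by the revenue-ordered set itself.

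Any assortment is feasible for the relaxed problem
\[
\max_{t_1,t_2}\ \frac{\Phi_1(t_1)(z+t_1+t_2)+\Phi_2(t_2)(z+t_2)}{(z+t_1+t_2)^2},
\]
so it suffices to show this relaxation has an optimum at a breakpoint pair $(U_1(\alpha_1),U_2(\alpha_2))$. Such an optimum is realised by a revenue-ordered-by-level assortment, which proves the theorem.

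To show this, fix $t_2$ and move $t_1$ within one linear piece, where $\Phi_1(t_1)=a+\rho t_1$. There the objective has the form $(\alpha+\beta s+\rho s^2)/s^2$ with $s=z+t_1+t_2$, so it equals $\rho+\beta/s+\alpha/s^2$. As a function of $w=1/s$ this is a quadratic, $\rho+\beta w+\alpha w^2$.

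Similarly, fix $t_1$ and let $t_2$ run over a piece where $\Phi_2(t_2)=b+\sigma t_2$. The numerator is then a quadratic in $t_2$ with leading coefficient $\sigma$, namely $\sigma t_2^2+(\text{linear})$, and the denominator is $(z+t_1+t_2)^2$. So again the objective is a quadratic in $w=1/(z+t_1+t_2)$.

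A quadratic in $w$ over an interval attains its maximum at an endpoint unless it is concave, that is, unless the coefficient of $w^2$ is negative. The main obstacle is therefore the interior-critical case. I would try first to show that the $w^2$ coefficient is nonnegative in each direction. In the $t_1$ direction the coefficient $\alpha$ collects terms such as $(a-\rho t_2-\rho z)\cdot(\ldots)+\Phi_2(t_2)(z+t_2)$, so its sign is not obvious. If it is negative, I would use joint optimality instead.

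At an interior optimum in $t_1$, the stationarity condition links $\rho$ to the current revenue value $R$ in a way comparable to MNL, giving $\rho\ge R$-type inequalities. Substituting these into the second derivative should show that moving $(t_1,t_2)$ along a suitable combined direction, for example keeping $D$ fixed while shifting utility between the levels, is linear or convex. That would push the point to a boundary in at least one coordinate. Iterating coordinate by coordinate, each move weakly increasing the objective and ending at a breakpoint, terminates because there are finitely many pieces.
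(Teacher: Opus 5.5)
The reduction to the level-total relaxation is correct. It is the paper's route: the paper does not reprove this theorem separately, but obtains it from Proposition~\ref{cor:PALM} together with the case $k=2$ of Theorem~\ref{thm:main}. There is, however, a genuine gap: the one case that carries the whole theorem is left as ``should show''.

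Your within-piece observation (the objective is a quadratic in $w=1/D$) is correct, but you have put the difficulty in the wrong coordinate. The $w^2$ coefficient is the numerator evaluated at $s=0$. In the $t_1$ direction this is exactly $\Phi_2(t_2)(z+t_2)\ge0$. So every $t_1$-section is convex in $w$, and $t_1$ can always be pushed to a breakpoint at no cost.

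In the $t_2$ direction, write $\Phi_2(t_2)=b+\sigma t_2$ on the piece; concavity gives $b\ge0$. The coefficient is then $-t_1\bigl(b-\sigma(z+t_1)\bigr)$, which is negative whenever $t_1>0$ and $b>\sigma(z+t_1)$. In that case the $t_2$-section can have a strict interior maximum. For example, take $z=1$ and $t_1=10$ from a zero-revenue level~1, and let level~2 be a tier of utility $1$ and revenue $10$ followed by a long tier of revenue $0.01$. The $t_2$-section then peaks at $t_2\approx9.2$, inside the second tier. So ``move one coordinate at a time, weakly improving, to a breakpoint'' fails exactly when $t_1\in\cB_1$, $t_1>0$ and $t_2$ is fractional.

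Your proposed repair does not close this. The $D$-preserving direction $(\epsilon,-\epsilon)$ is indeed convex inside pieces; its $\epsilon^2$ coefficient is $\sigma/D^2$. But in the hard case $t_1$ already sits at a concave kink of $\Phi_1$, so this direction can decrease on both sides. Even when it does not, sliding along it may stop with $t_1$ at the next breakpoint and $t_2$ still fractional.

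What you actually need is the sign of $b-\sigma(z+t_1)$ at an optimum. At a $t_2$-stationary point one has
\[
 b-\sigma(z+t_1)=-\frac{t_1D}{z+t_2}\Bigl(\frac{\Phi_1(t_1)}{t_1}-\frac{\Phi_2(t_2)}{D}\Bigr).
\]
This is the $k=2$, $j=2$ case of the paper's identity $\frac{D^2}{2}\Rev_{x_2x_2}=\frac{q_1^2}{1-q_1}M_1$. So you must prove $M_1=\mu_1-q_2\mu_2>0$ at a positive-valued optimum with $z>0$.

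This is a joint statement, not a coordinatewise one. The paper proves it in three steps:
\begin{enumerate}
\item The scaling inequality $\Rev(z,\lambda x)\ge\Rev(z/\lambda,x)$ gives $\Rev_z\le0$ at a local maximum (Lemma~\ref{lem:zderivative}).
\item Combined with the one-sided conditions $\Rev^-_{x_i}\ge0$ and $A_i^-(x_i)\le\mu_i$, this yields $M_1\ge-D\Rev_z$, and positivity of the revenue upgrades this to $M_1>0$ (Lemma~\ref{lem:first-order-margins}).
\item Hence the $t_2$-section is strictly convex at the stationary point, which contradicts optimality.
\end{enumerate}
Your ``$\rho\ge R$-type inequalities'' do not supply this step.

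You would also need to handle two further cases separately. When $t_1=0$ you are in the Luce case, where the $t_2$-section is linear in $w$ and hence monotone. When $z=0$ your objective is undefined at the origin; the paper treats this case via the bound $\Rev(0,x)\le p_{\max}$.
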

\begin{conjecture}[Revenue-ordered by level for PALM \citep{flores2019assortment}]\label{conj:palm-revenue-order}The assortment problem under PALM admits an optimal solution that is revenue-ordered by level.
\end{conjecture}

A major obstacle to proving this conjecture is the interaction between assortment decisions across levels. Although the choice process is sequential, a product’s purchase probability depends on the assortments offered at its own level and at all preceding and following levels. Establishing the optimality of revenue ordering at every level therefore requires a careful analysis of these non-linear interactions and their effect on total revenue. We prove Conjecture~\ref{conj:palm-revenue-order} by studying a refined version of the assortment problem under the PALM which we introduce next.

\subsection{Refined assortment optimization under the PALM}
In the traditional assortment problem, each product is either offered in full or omitted. In the \emph{refined assortment problem under PALM}, the firm may reduce the utility of any product  $g$ to $\tilde u_g\in[0,u_g]$. \citet{berbeglia2021refined} studied the refined assortment optimization problem and showed that for several choice models the optimal refined assortment revenue is strictly higher than the traditional assortment revenue. To study the refined problem under the PALM, the revenues and perception levels remain the same and choice probabilities follow the same PALM rule with the selected utilities $\tilde u_g$. Taking $\tilde u_g\in\{0,u_g\}$ for every product recovers every traditional assortment.

For a refined decision, let $x_i=\sum_{g\in X_i}\tilde u_g$ denote the total selected utility at level $i$ and write
$$ D(z,x)=z+\sum_i x_i. $$

A key observation about the refined model is that, for fixed level totals $x_1,\dots,x_k$, the expected revenue is maximized by allocating utility to products in decreasing order of revenue within each level.  Indeed, transferring equal utility
from a lower-revenue product to higher-revenue product within the same level increases the expected revenue from that level without changing the revenue obtained from the other levels.

This observation allows the optimal allocation within each level to be described using a single-function. For each level $i$, let $U_i$ denote the total available utility given by 
$$ U_i=\sum_{g\in X_i}u_g.$$ 
Arrange the products in $X_i$ in nonincreasing revenue order and partition $[0,U_i]$ into consecutive intervals, assigning each product $g$ an interval of length $u_g$. Define $P_i:[0,U_i]\to\mathbb{R}_+$ to equal $r_g$ on the interval assigned to $g$. Thus, $P_i$ is a nonincreasing step function. Merging adjacent intervals with equal revenue gives its revenue tiers. The construction is illustrated in Figure~\ref{fig:palm-revenue-curves}.

The revenue at level $i$ obtainable from
utility $x_i\in[0,U_i]$ is the area under this curve:
\begin{equation}\label{eq:Ai}
 A_i(x_i)=\int_0^{x_i}P_i(t)\dd t.
\end{equation}
In other words, $A_i(x_i)$ is obtained by allocating $x_i$ of utility to the highest-revenue product first. Furthermore, since $P_i$ is a weakly decreasing step function, $A_i$ is continuous, nondecreasing, piecewise linear and concave. 

Let $\cB_i$ denote the set of breakpoints of $A_i$, together with
the endpoints $0$ and $U_i$. In other words, these are points where 
$P_i$ drops to a strictly lower revenue. We call $x_i\in[0,U_i]$ \emph{utility-integral} if
$x_i\in\cB_i$.
Figure~\ref{fig:palm-revenue-curves} illustrates
the construction.

\begin{figure}[htbp]
\centering
\begingroup
\definecolor{tierorange}{RGB}{190,103,29}
\begin{minipage}[t]{0.48\textwidth}
\centering
\small\textbf{Marginal revenue}\par\vspace{4mm}
\begin{tikzpicture}[x=0.76cm,y=0.44cm,
  every node/.style={font=\small},
  axis/.style={->,black!65,line width=0.45pt},
  guide/.style={black!24,densely dashed,line width=0.4pt}]
  \path[use as bounding box] (-1.0,-1.65) rectangle (8.1,9.6);
  \fill[tierorange!17] (0,0)--(0,8)--(2,8)--(2,5)--(3.7,5)--(3.7,0)--cycle;
  \draw[guide] (2,0)--(2,8);
  \draw[guide] (5,0)--(5,5);
  \draw[guide] (7,0)--(7,2);
  \draw[axis] (0,0)--(7.9,0) node[above] {$t$};
  \draw[axis] (0,0)--(0,9.15) node[above] {$P_i(t)$};
  \foreach \h/\lab in {8/8,5/5,2/2}{
    \draw[black!65,line width=0.4pt] (-0.08,\h)--(0.08,\h);
    \node[anchor=east] at (-0.15,\h) {$\lab$};
  }
  \draw[deepblue,line width=1.15pt] (0,8)--(2,8);
  \draw[deepblue,line width=1.15pt] (2,5)--(5,5);
  \draw[deepblue,line width=1.15pt] (5,2)--(7,2);
  \foreach \x/\y in {0/8,2/8,5/5,7/2}{
    \fill[deepblue] (\x,\y) circle[radius=1.65pt];
  }
  \foreach \x/\y in {2/5,5/2}{
    \draw[deepblue,fill=white,line width=0.8pt]
      (\x,\y) circle[radius=1.65pt];
  }
  \draw[tierorange,densely dashed,line width=0.7pt] (3.7,0)--(3.7,5);
  \node[text=tierorange] at (1.9,2.55) {$A_i(x_i)$};
  \foreach \x in {0,2,5,7}{
    \fill[deepgreen] (\x,0) circle[radius=2.25pt];
  }
  \fill[tierorange] (3.7,0) circle[radius=2.25pt];
  \node[below=5pt] at (0,0) {$0$};
  \node[below=5pt] at (2,0) {$2$};
  \node[below=5pt,text=tierorange] at (3.7,0) {$x_i$};
  \node[below=5pt] at (5,0) {$5$};
  \node[below=5pt] at (7,0) {$7$};
\end{tikzpicture}
\end{minipage}\hfill
\begin{minipage}[t]{0.48\textwidth}
\centering
\small\textbf{Cumulative revenue numerator}\par\vspace{4mm}
\begin{tikzpicture}[x=0.76cm,y=0.10cm,
  every node/.style={font=\small},
  axis/.style={->,black!65,line width=0.45pt},
  guide/.style={black!24,densely dashed,line width=0.4pt}]
  \path[use as bounding box] (-1.0,-7.26) rectangle (8.1,42.24);
  \draw[guide] (2,0)--(2,16);
  \draw[guide] (5,0)--(5,31);
  \draw[guide] (7,0)--(7,35);
  \draw[axis] (0,0)--(7.9,0) node[above] {$t$};
  \draw[axis] (0,0)--(0,40.26) node[above] {$A_i(t)$};
  \draw[deepblue,line width=1.15pt] (0,0)--(2,16)--(5,31)--(7,35);
  \draw[tierorange,densely dashed,line width=0.7pt]
    (3.7,0)--(3.7,24.5);
  \draw[tierorange!65,densely dashed,line width=0.5pt]
    (0,24.5)--(3.7,24.5);
  \node[anchor=south east,text=tierorange] at (3.58,25.3) {$A_i(x_i)=24.5$};
  \foreach \x/\y in {0/0,2/16,5/31,7/35}{
    \fill[deepgreen] (\x,\y) circle[radius=2.25pt];
  }
  \fill[tierorange] (3.7,24.5) circle[radius=2.5pt];
  \fill[tierorange] (3.7,0) circle[radius=2.25pt];
  \node[below=5pt] at (0,0) {$0$};
  \node[below=5pt] at (2,0) {$2$};
  \node[below=5pt,text=tierorange] at (3.7,0) {$x_i$};
  \node[below=5pt] at (5,0) {$5$};
  \node[below=5pt] at (7,0) {$7$};
\end{tikzpicture}
\end{minipage}
\endgroup
\caption{A level with tier utilities $2,3,2$ and revenues $8,5,2$.
The shaded area under $P_i$ equals $A_i(x_i)$, shown on the right.
Here $x_i=3.7$ selects the entire first tier and part of the second,
giving $A_i(x_i)=24.5$. Green points mark the utility-integral choices
$\cB_i=\{0,2,5,7\}$, each of which selects only complete tiers.}
\label{fig:palm-revenue-curves}
\end{figure}

For a given $x$ with $D(z,x)>0$, the best refined revenue at these
level totals is
\begin{equation}\label{eq:refined}
 \Rev(z,x)
 =\sum_{i=1}^k\frac{A_i(x_i)}{D(z,x)}
  \prod_{h<i}\left(1-\frac{x_h}{D(z,x)}\right).
\end{equation}
This is the traditional assortment revenue formula
\eqref{eq:ordinary-palm-revenue}, except that each numerator is replaced by its
best fractional value $A_i(x_i)$. Every $x_i\in[0,U_i]$ is attainable
by a partial selection, so the refined assortment problem is
\begin{equation}\label{eq:refined-problem}
 \max_{0\le x_i\le U_i\ (i=1,\ldots,k)}\Rev(z,x).
\end{equation}
The all-zero selection has revenue zero, including when $z=0$.

Next, we relate utility integrality in the refined assortment problem to revenue ordering in the traditional assortment problem.
\subsection{From refined integrality to revenue-ordered assortments}

The following proposition turns a utility-integral optimum of the refined assortment problem (i.e. $x^*_i \in \cB_i$ for all $i$) into a revenue-ordered optimum of the traditional assortment problem.

\begin{proposition}[Existence of a revenue-ordered-by-level optimum]
\label{cor:PALM}
If the refined problem has an optimum with $x^*_i\in\cB_i$ for every
level, then the traditional assortment problem under the PALM has a revenue-ordered-by-level optimum with the
same revenue. 
\end{proposition}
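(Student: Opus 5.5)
The plan is a sandwich argument. First I show that the refined optimum is an upper bound on every traditional revenue. Then I show that a utility-integral refined optimum is itself attained by a revenue-ordered-by-level assortment. Together these give the claim.

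\emph{Upper bound.} Take any assortment $S\subseteq X$ with associated vector $x$ from \eqref{eq:x}, and assume $S$ is nonempty or $z>0$, so that $D(z,x)>0$; the degenerate empty case has revenue zero on both sides. The refined decision $\tilde u_g=u_g$ for $g\in S$ and $\tilde u_g=0$ otherwise has the same level totals $x_i\in[0,U_i]$.
\begin{itemize}
\item Within level $i$, the products of $S\cap X_i$ occupy total utility $x_i$. Ordering utility by nonincreasing revenue can only increase the integral, so $\sum_{g\in S\cap X_i}u_gr_g\le A_i(x_i)$. This is the rearrangement observation preceding \eqref{eq:Ai}.
\item The weights $\frac{1}{D(z,x)}\prod_{h<i}(1-x_h/D(z,x))$ are nonnegative, because $x_h\le D(z,x)$.
\end{itemize}
Comparing \eqref{eq:ordinary-palm-revenue} with \eqref{eq:refined} term by term therefore gives $\Rev^{\PALM}(S)\le \Rev(z,x)\le \Rev(z,x^*)$, where the last inequality uses optimality of $x^*$ in \eqref{eq:refined-problem}.

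\emph{Attainment.} Since $x^*_i\in\cB_i$, either $x^*_i=0$ or $x^*_i$ is the right end of some revenue tier. In the latter case, let $\tau_i$ be the revenue of that tier, i.e.\ the value of $P_i$ immediately to the left of $x^*_i$. If $x^*_i=0$, set $\tau_i=+\infty$. Define $S^*\cap X_i=\{g\in X_i: r_g\ge\tau_i\}$. By construction of $P_i$, products with $r_g\ge\tau_i$ occupy exactly the interval $[0,x^*_i]$, because tiers merge all equal-revenue products and $x^*_i$ is a point where $P_i$ strictly drops (or equals $U_i$). Hence:
\begin{itemize}
\item $\sum_{g\in S^*\cap X_i}u_g=x^*_i$;
\item $\sum_{g\in S^*\cap X_i}u_gr_g=\int_0^{x^*_i}P_i=A_i(x^*_i)$.
\end{itemize}
So $S^*$ has associated vector $x^*$ and $\Rev^{\PALM}(S^*)=\Rev(z,x^*)$, provided $D(z,x^*)>0$. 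If $D(z,x^*)=0$, then $x^*=0$ and $z=0$, the refined optimum is $0$, and the empty assortment (which is revenue-ordered, with all $\tau_i=+\infty$) attains it.

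Combining the two parts, $S^*$ is a traditional optimum, it is revenue-ordered by level by Definition~\ref{def:revenue-ordered-by-level}, and its revenue equals the refined optimum.

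The main point needing care is the tier bookkeeping: breakpoints must coincide with cumulative sums of complete equal-revenue groups, so that thresholding at $\tau_i$ picks exactly utility $x^*_i$. This follows from defining $\cB_i$ as the points where $P_i$ strictly drops, together with the endpoints.
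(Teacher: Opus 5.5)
Your proposal is correct and follows the paper's proof essentially step for step. You bound each ordinary assortment's revenue by $\Rev(z,x)$ using $\sum_{g\in S\cap X_i}u_gr_g\le A_i(x_i)$ and nonnegative stage coefficients, then attain the bound by selecting complete tiers up to $x_i^*$; your explicit threshold $\tau_i$ and your treatment of the $D(z,x^*)=0$ case only make explicit details the paper leaves implicit.
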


\begin{proof}
For any ordinary assortment $S$, take $x_i=\sum_{g\in S\cap X_i}u_g$.
Since $A_i(x_i)$ is the largest numerator obtainable at this utility,
\[
 \sum_{g\in S\cap X_i}u_gr_g\le A_i(x_i).
\]
All coefficients multiplying these numerators in
\eqref{eq:ordinary-palm-revenue} are nonnegative. Hence
$\Rev^{\PALM}(S)\le\Rev(z,x)$, so the refined optimum is an upper
bound on every ordinary assortment's revenue.

Now let $x^*$ be a utility-integral refined optimum. In each level,
select complete tiers in decreasing revenue order until their cumulative
utility is $x_i^*$, and let $S^*$ be the resulting assortment. Its level
utilities are $x_i^*$ and its level numerators are exactly $A_i(x_i^*)$.
Consequently,
\begin{equation}\label{eq:refined PALM}
 \Rev^{\PALM}(S^*)=\Rev(z,x^*).
\end{equation}
Thus $S^*$ attains the refined upper bound and is optimal among all
ordinary assortments. The same conclusion holds for the empty selection
by the zero-revenue convention.
\end{proof}

In light of proposition \eqref{cor:PALM}, one way to prove the Conjecture \eqref{conj:palm-revenue-order} is to show that the refined assortment problem has an optimal solution that is utility-integral. We show this in the next section. 

\section{Utility-Integrality in the Refined Problem}\label{sec:model}
Our main theorem establishes utility integrality for the refined assortment problem under PALM.

\begin{theorem}[Utility-integrality of refined problem]\label{thm:main}
Let $k\ge1$ and $z\ge0$. Then the refined problem~\eqref{eq:refined-problem} has an optimal solution
$x^*$ that is utility-integral. In other words, $x^*=(x^*_1,\dots,x_k^*)$ with $x_i^*\in\cB_i$ for $i=1,\dots,k$.
\end{theorem}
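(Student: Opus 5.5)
The proof combines compactness with a reduction to the affine pieces of the $A_i$, followed by a local perturbation argument. Since the box $\prod_i[0,U_i]$ is compact and $\Rev(z,\cdot)$ is continuous wherever $D(z,x)>0$, an optimum exists. The only delicate point is $z=0$ near the origin, and there the revenue is bounded by $\max_g r_g$ and tends to nothing larger than the values attained elsewhere. If the optimal value is $0$, the point $x^*=0\in\prod_i\cB_i$ already works. So I assume the optimal value is positive. Among all optima I pick one, $x^*$, that minimizes the number of coordinates with $x_i^*\notin\cB_i$. I then argue by contradiction that this number is zero.

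Fix the cell of the breakpoint grid containing $x^*$. On it each $A_i$ is affine, $A_i(x_i)=c_i+p_ix_i$, where $p_i$ is the revenue of the current tier and $c_i\ge0$. Write $\Rev(z,x)=N(x)/D^k$ with
\[
 N(x)=\sum_{i=1}^k (c_i+p_ix_i)\,D^{\,k-i}\prod_{h<i}(D-x_h),
\]
a polynomial on the cell. For a fractional coordinate $j$, meaning $x_j^*$ lies in the interior of its piece, optimality forces $\partial_j\Rev=0$ and $\partial_{jj}\Rev\le0$. The goal is to exhibit a feasible direction inside the cell along which $\Rev$ is nondecreasing all the way to the cell boundary. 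That produces an optimum with strictly fewer fractional coordinates, contradicting minimality.

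\textbf{Case 1: exactly one nonzero coordinate, $x_j$, is fractional and all other coordinates are $0$.} The model then collapses to Luce, with $\Rev=(c_j+p_jx_j)/(z+x_j)$. This is a linear-fractional function of $x_j$, hence monotone. Moving $x_j$ to an endpoint of its piece does not decrease the revenue.

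\textbf{Case 2: at least two coordinates are nonzero.} This is the key claim: every positive local maximizer with at least two nonzero coordinates has no fractional coordinate in the interior of its cell. My plan is to restrict $\Rev$ to a fractional coordinate $x_j$ together with one further nonzero coordinate $x_l$. I would use the change of variables $D$ and $s=x_j$. On the slice $D=\text{const}$, $N$ is a polynomial in $s$ whose coefficient structure is explicit: the affine factors are $(c_j+p_js)$, $(D-s)$ and $(D-x_l)$, with $x_l=D-z-s-\cdots$. I would then combine the first-order conditions in $x_j$ and $x_l$ with the second-order condition to show that the Hessian of $\Rev$ restricted to the $(x_j,x_l)$-plane cannot be negative semidefinite at a critical point. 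Concretely, I would eliminate $p_j$ using $\partial_j\Rev=0$ and show that $\partial_{jj}\Rev\,\partial_{ll}\Rev-(\partial_{jl}\Rev)^2<0$, or that $\partial_{jj}\Rev>0$. Once that holds, some direction in the plane is an ascent or flat direction. Following it, via the linear-fractional/polynomial structure along rays, reaches a cell face without loss of revenue.

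I expect this second-order computation to be the main obstacle. It is the one place where the cross-level interaction through the common denominator $D$ and the survival products $\prod_{h<i}(1-x_h/D)$ must be controlled. My first attempt would take the direction that moves $x_j$ while rescaling the outside-relative share: perturb $x_j$ alone, so that $D$ changes by the same amount, and write $\Rev$ along this line as $Q(t)/(D+t)^k$. I would then use $\partial_j\Rev=0$ to show that $Q(t)-\Rev^*\,(D+t)^k$ has a root of multiplicity at least two at $t=0$ with a nonnegative leading local coefficient. This is exactly where the presence of a second nonzero level should enter, through the factor $(D-x_l)$. Repeating the step finitely often, since each step removes one fractional coordinate, yields an optimum with $x_i^*\in\cB_i$ for all $i$.
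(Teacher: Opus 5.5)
Your reduction steps match the paper: existence, the zero-value case, and pushing a lone active fractional coordinate to a tier endpoint via the linear-fractional form. But Case 2, which is the entire content of the theorem, is only announced as a plan, and the plan lacks the ingredients that make it work.

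At a point with $\Rev_{x_j}=0$ and $x_j$ inside a tier, one has $\frac{D^2}{2}\Rev_{x_jx_j}=\sum_{i=1}^k q_i\alpha_iM_i$. Here:
\begin{itemize}
\item $M_i=c_i(\mu_i-V_{i+1})$ are the reach-weighted margins of \emph{all} levels;
\item $c_i$ is the survival product, $\mu_i=A_i(x_i)/x_i$, and $V_{i+1}$ is the continuation value;
\item the weights $\alpha_i=1-\sum_{r=j+1}^{i-1}q_r/(1-q_r)$ for $i\ge j+2$ can be negative.
\end{itemize}
So the sign of the curvature depends on the margins of every level. Your plan (first-order conditions in $x_j$ and one partner $x_l$, eliminating $p_j$) gives no control over the remaining $M_i$. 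The paper obtains uniform lower bounds on all margins from two ideas absent from your proposal:
\begin{itemize}
\item[(i)] The scaling inequality $\Rev(z,\lambda x)\ge\Rev(z/\lambda,x)$ for $\lambda\in(0,1]$. It follows from $A_i(\lambda t)\ge\lambda A_i(t)$ and invariance of the survival factors, and turns local optimality into the outside-option condition $\Rev_z\le0$ when $z>0$.
\item[(ii)] The one-sided first-order conditions $\Rev^-_{x_i}\ge0$ in \emph{every} active coordinate, combined with concavity $A_i^-(x_i)\le\mu_i$. These give $M_i\ge\sum_\ell q_\ell M_\ell=-D\Rev_z$ for all $i$. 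A backward recursion then makes this bound strictly positive when the value is positive.
\end{itemize}
Only with these bounds can the negative $\alpha_i$ be absorbed (using $q_0>0$) to get $\Rev_{x_jx_j}>0$, which contradicts the second-order necessary condition.

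Two further points. First, your fallback is not justified. You propose to show only a \emph{nonnegative} second-order coefficient along the $x_j$-line and then ``follow the direction to the cell face without loss of revenue.'' Along that line $\Rev$ is a rational function of degree up to $k$, and a weak second-order condition at one point gives no monotonicity up to the tier boundary. You need the strict inequality. It contradicts optimality outright and makes the minimal-fractional-count device unnecessary. Second, the case $z=0$ needs separate treatment. The outside-option argument in (i) requires $z>0$, and your existence argument near the origin is only heuristic. The paper handles $z=0$ directly through $\Rev(0,x)\le p_{\max}\sum_iq_ic_i\le p_{\max}$, which is attained by the utility-integral point $b_{i^*}e_{i^*}$.
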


Another consequence of Theorem~\ref{thm:main} is that under the PALM, the refined assortment optimization yields exactly the same revenue as the traditional assortment optimization. This extends a result of \citet{berbeglia2021refined} who showed that both problems yield the same revenue under the MNL choice model (a special case of the PALM with $k=1$ levels).

The remainder of this section is concerned with the proof of Theorem~\ref{thm:main}. The main difficulty in the analysis arises in the case when $z>0$ and $k\geq 2$. In Subsection~\ref{ss:2nd diff}, we establish a key intermediate result: if $z>0$ and $k\geq 2$, then every local maximizer of $\Rev(z,\cdot)$ with positive objective value and at least two non-zero coordinates is necessarily utility-integral. The proof of this result follows by combining first-order optimality conditions with a curvature argument that rules out utility-fractional coordinates. In Subsection~\ref{ss:main proof}, we use this result to establish Theorem~\ref{thm:main}, treating the limit cases where $z=0$ or at most one coordinate is non-zero separately.

\subsection{Utility-integrality of local optima}\label{ss:2nd diff}
In this section, we investigate utility-integral of local optima of the refined assortment optimization problem. In particular, we show that any local optimum with at least two non-zero coordinates is necessarily utility-integral. Recall that
 \[ 
  \Rev(z,x)
  =
  \sum_{i=1}^k
  \frac{A_i(x_i)}{D(z,x)}
  \prod_{r<i}\left(1-\frac{x_r}{D(z,x)}\right)\text{~~where~~}D(z,x)=z+\sum_{i=1}^k x_i.
 \]
Throughout, we use $\Rev_z(z,x)$ (resp.\ $\Rev_{x_i}(z,x)$) to represent the partial derivative of $\Rev$ with respect to $z$ (resp.\ $x_i$) evaluated at $(z,x)$.
\begin{lemma}
\label{lem:zderivative}
Let $z\geq 0$ and $x\geq 0$. For every feasible $x$ and every $0<\lambda\le1$, we have
\begin{equation}
\label{eq:scaling}
 \Rev(z,\lambda x)\ge \Rev(z/\lambda,x).
\end{equation}
Consequently, if $z>0$ and $x$ is a local maximizer of
$\Rev(z,\cdot)$, then
\begin{equation}
\label{eq:Rznegative}
 \Rev_z(z,x)\le0.
\end{equation}
\end{lemma}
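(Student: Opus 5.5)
The plan is to exploit the homogeneity of the PALM choice probabilities in $(z,x)$ together with the concavity of the functions $A_i$. The key identity is
\[
D(z/\lambda,x)=\frac{z}{\lambda}+\sum_i x_i=\frac{1}{\lambda}\,D(z,\lambda x),
\]
so that, whenever $D(z,\lambda x)>0$,
\[
\frac{x_h}{D(z/\lambda,x)}=\frac{\lambda x_h}{D(z,\lambda x)}
\qquad\text{and}\qquad
\frac{A_i(x_i)}{D(z/\lambda,x)}=\frac{\lambda A_i(x_i)}{D(z,\lambda x)}.
\]
Substituting these into \eqref{eq:refined} rewrites $\Rev(z/\lambda,x)$ in terms of the same denominator and survival factors as $\Rev(z,\lambda x)$:
\[
\Rev(z/\lambda,x)=\sum_{i=1}^k\frac{\lambda A_i(x_i)}{D(z,\lambda x)}\prod_{h<i}\Bigl(1-\frac{\lambda x_h}{D(z,\lambda x)}\Bigr).
\]
The only difference from $\Rev(z,\lambda x)$ is that the numerators are $\lambda A_i(x_i)$ instead of $A_i(\lambda x_i)$.

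Next I compare the numerators. Each $A_i$ is concave on $[0,U_i]$ with $A_i(0)=0$, so
\[
A_i(\lambda x_i)=A_i\bigl(\lambda x_i+(1-\lambda)0\bigr)\ge \lambda A_i(x_i)+(1-\lambda)A_i(0)=\lambda A_i(x_i).
\]
The point $\lambda x$ is feasible because $0\le\lambda x_i\le x_i\le U_i$. The coefficients multiplying the numerators are nonnegative, since each factor satisfies $1-\lambda x_h/D(z,\lambda x)\ge0$ (because $\lambda x_h\le D(z,\lambda x)$). Hence termwise comparison gives \eqref{eq:scaling}.

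The degenerate case $D(z,\lambda x)=0$ is handled separately. It forces $z=0$ and $x=0$, and then both sides of \eqref{eq:scaling} are $0$ by the zero-revenue convention.

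For the consequence, fix $z>0$ and let $x$ be a local maximizer of $\Rev(z,\cdot)$. As $\lambda\uparrow1$, the point $\lambda x$ approaches $x$ and stays feasible, so for $\lambda$ close to $1$,
\[
\Rev(z,x)\ge\Rev(z,\lambda x)\ge\Rev(z/\lambda,x).
\]
Setting $t=z/\lambda$ gives $\Rev(t,x)\le\Rev(z,x)$ for all $t$ in a right neighbourhood of $z$. Since $z>0$, we have $D>0$ near $(z,x)$, so $\Rev(\cdot,x)$ is a smooth rational function of $z$ there (the $A_i(x_i)$ are constants in $z$). Its derivative therefore equals its right derivative, which the last inequality shows is $\le 0$. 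This yields \eqref{eq:Rznegative}.

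I expect no serious obstacle. The only point requiring care is bookkeeping: the survival factors must transform exactly under the rescaling, and the nonnegativity of the coefficients must be justified so that the concavity inequality on the numerators transfers to the whole sum.
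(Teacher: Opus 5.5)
Your proposal is correct and follows essentially the same route as the paper. Rescaling makes the denominators and survival factors match, and the numerator comparison $A_i(\lambda x_i)\ge\lambda A_i(x_i)$ gives the inequality; you derive it from concavity with $A_i(0)=0$, while the paper uses monotonicity of $A_i(t)/t$. The chain $\Rev(z,x)\ge\Rev(z,\lambda x)\ge\Rev(z/\lambda,x)$ for $\lambda$ near $1$ then yields $\Rev_z\le 0$. Your explicit checks of coefficient nonnegativity and differentiability in $z$ supply details the paper leaves implicit.
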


\begin{proof}
If $z=0$ and $x=0$, then \eqref{eq:scaling} holds trivially, so suppose either $z>0$ or $x>0$. Then the survival factors on the two sides of \eqref{eq:scaling} agree, since
\[
 1-\frac{\lambda x_i}{z+\lambda\sum_r x_r}
 =
 1-\frac{x_i}{z/\lambda+\sum_r x_r}.
\]
Moreover, since $P_i$ is weakly decreasing, $t\mapsto A_i(t)/t$ is also weakly decreasing for $t>0$. Hence, for $x_i>0$, we have $\frac{A_i(\lambda x_i)}{\lambda x_i}
 \ge
 \frac{A_i(x_i)}{x_i}$. Consequently, for $x_i\geq 0$, we obtain
$$ A_i(\lambda x_i)\ge \lambda A_i(x_i). $$
Substituting this into the definition of $\Rev$ proves \eqref{eq:scaling}.

Now let $x$ be a local maximizer for $\Rev(z,\cdot)$ where $z>0$.
Given $\epsilon>0$, set $\lambda=z/(z+\epsilon).$  
By choosing $\epsilon$ sufficiently small, $\lambda x$ can be arbitrarily close to the local maximum $x$. Together with the identity $z/\lambda=z+\epsilon$, we therefore obtain
\[
 \Rev(z,x)
 \ge
 \Rev(z,\lambda x)
 \ge
 \Rev(z+\epsilon,x).
\]
Dividing by $\epsilon$ and letting $\epsilon\downarrow0$ gives $ \Rev_z(z,x)\le0$, and completes the proof.
\end{proof}

From herein, we suppose $z>0$ and $x>0$ unless otherwise stated. To simplify notation, we write $D$ as shorthand for $D(z,x)$ and introduce the following notation:
\begin{equation}
\label{eq:q-c-mu}
 q_i=\frac{x_i}{D},
 \quad
 q_0=\frac{z}{D}
 =1-\sum_{i=1}^kq_i>0,
 \quad
 c_i=\prod_{r<i}(1-q_r),
 \quad
 \mu_i=\frac{A_i(x_i)}{x_i}.
\end{equation}
Let $V_i$ denote the expected revenue conditional on reaching level $i$. Then, using the above notation, $V_i$ satisfies the recursion
\begin{equation}
\label{eq:continuation-values}
 V_{k+1}=0,
 \qquad
 V_i=q_i\mu_i+(1-q_i)V_{i+1}.
\end{equation}
Hence, in particular, we have $\Rev(z,x)=V_1$. Also denote the reach-weighted revenue margin at level $i$ by
\begin{equation}
\label{eq:margin-def}
 M_i:=c_i(\mu_i-V_{i+1}).
\end{equation}

\begin{lemma}
\label{lem:first-order-margins}
Let $z>0$, and let $x>0$ be a local maximizer of $\Rev(z,\cdot)$. Then
\begin{equation}
\label{eq:margin-sum}
 M_i\ge \sum_{\ell=1}^kq_\ell M_\ell=-D\Rev_z \qquad(i=1,\ldots,k).
\end{equation}
If, in addition, $\Rev(z,x)>0$, then $\Rev_z<0$ and hence  $M_i\ge-D\Rev_z>0$ for all $i=1,\ldots,k$.
\end{lemma}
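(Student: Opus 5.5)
The plan is to compare $\Rev$ with the auxiliary function in which the level averages $\mu_i=A_i(x_i)/x_i$ are frozen, mimicking the scaling argument of Lemma~\ref{lem:zderivative} but applied one coordinate at a time. Fix the local maximizer $x>0$ and let $\bar\mu_i=A_i(x_i)/x_i$ be its averages. Define
\[
 G(z',y)=\sum_{i=1}^k\frac{\bar\mu_i y_i}{D(z',y)}\prod_{r<i}\Bigl(1-\frac{y_r}{D(z',y)}\Bigr),
\]
so that $G(z,x)=\Rev(z,x)$. Since $A_i(x_i)$ is a fixed number, $\Rev$ and $G$ depend on $z$ only through $D$ in the same way, and hence $\Rev_z(z,x)=G_z(z,x)$.

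\textbf{Step 1 (frozen-average comparison).} For $0<\lambda\le1$ we have $A_i(\lambda x_i)\ge\lambda A_i(x_i)=\bar\mu_i\lambda x_i$, as in the proof of Lemma~\ref{lem:zderivative}. Let $x^{(\lambda)}$ be $x$ with only the $i$-th coordinate replaced by $\lambda x_i$. All coefficients multiplying the numerators are nonnegative, so
\[
 \Rev(z,x)\ \ge\ \Rev(z,x^{(\lambda)})\ \ge\ G(z,x^{(\lambda)})
\]
for $\lambda$ close to $1$, using local maximality. Since $G(z,x)=\Rev(z,x)$ and $G$ is smooth, shrinking $x_i$ cannot increase $G$. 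This gives $G_{x_i}(z,x)\ge0$ for every $i$.

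\textbf{Step 2 (compute the derivatives of $G$ via $q$).} $G$ is the value $V_1$ of the recursion \eqref{eq:continuation-values} written as a function of $q_1,\dots,q_k$ with $\mu=\bar\mu$. By the recursion, $\partial V_1/\partial q_\ell=c_\ell(\mu_\ell-V_{\ell+1})=M_\ell$. Perturbing $z$ by $\mathrm{d}z$ changes each $q_\ell$ by $-q_\ell\,\mathrm{d}z/D$. Perturbing $x_i$ by $\mathrm{d}t$ changes $q_i$ by $(1-q_i)\,\mathrm{d}t/D$ and every other $q_\ell$ by $-q_\ell\,\mathrm{d}t/D$. The chain rule then gives
\[
 D\,G_z=-\sum_\ell q_\ell M_\ell,\qquad D\,G_{x_i}=M_i-\sum_\ell q_\ell M_\ell .
\]
Combining this with Step 1 and $\Rev_z=G_z$ yields \eqref{eq:margin-sum}.

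\textbf{Step 3 (strict positivity).} Write $S=\sum_\ell q_\ell M_\ell$. Multiply $M_i\ge S$ by $q_i$ and sum over $i$ to get $S\ge(1-q_0)S$, i.e.\ $q_0S\ge0$; since $q_0>0$, this gives $S\ge0$. Consequently every $M_i\ge S\ge0$. Now suppose $\Rev(z,x)>0$. Then some $\mu_j>0$; take $j$ to be the largest such index. Every level after $j$ has $\mu=0$, so $V_{j+1}=0$. Moreover $c_j>0$, because each $q_r\le1-q_0<1$. Hence $M_j=c_j\mu_j>0$. All terms of $S$ are nonnegative, so $S\ge q_jM_j>0$, which gives $\Rev_z=-S/D<0$ and $M_i\ge S>0$ for all $i$.

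I expect the main obstacle to be Step 1. The naive route uses the one-sided derivative of $A_i$, and this yields an inequality in the wrong direction, because $P_i(x_i^-)\ge\mu_i$. The frozen-average comparison avoids this by only decreasing $x_i$. That is exactly the direction in which the concavity bound $A_i(\lambda x_i)\ge\lambda A_i(x_i)$ helps, and it also works when $x_i=U_i$ sits at its upper bound. The remaining care is to check that $G$ is differentiable at $x$, which holds since $D>0$ when $z>0$.
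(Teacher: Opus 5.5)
Your proof is correct, but it takes a different route from the paper's.

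\textbf{First claim, $M_i\ge S$ with $S=\sum_\ell q_\ell M_\ell$.} The paper works with $\Rev$ itself. It computes $\Rev_z$ from an inductive formula for $D\,\partial V_i/\partial z$. It then differentiates along the direction $(z+t,x-te_i)$, which moves utility from level $i$ to the outside option with $D$ held fixed, and gets $\Rev_{x_i}^- - \Rev_z=\frac{c_i}{D}\bigl(A_i^-(x_i)-V_{i+1}\bigr)$. Finally it bounds $A_i^-(x_i)\le\mu_i$ by concavity.

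You instead build the frozen-average function $G$. It is smooth, agrees with $\Rev$ at $x$, has the same $z$-derivative, and lies below $\Rev$ when $x_i$ is decreased. This lets you apply the ordinary first-order condition to a differentiable function. You then read off both $G_z$ and $G_{x_i}$ from the single identity $\partial V_1/\partial q_\ell=M_\ell$. The underlying concavity fact is the same: $A_i(\lambda x_i)\ge\lambda A_i(x_i)$ in your version, $A_i^-(x_i)\le\mu_i$ in the paper's. Your packaging avoids one-sided chain rules.

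\textbf{Strict positivity.} The paper imports $\Rev_z\le 0$ from Lemma~\ref{lem:zderivative}. It rules out $\Rev_z=0$ by a backward induction that forces $V_1=0$. You get $S\ge0$ directly from $q_0S\ge0$. You then exhibit the strictly positive term $q_jM_j$ at the last level with $\mu_j>0$. This makes the lemma self-contained, and Lemma~\ref{lem:zderivative} is then not needed here.

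\textbf{Correction to your closing remark.} The one-sided-derivative route does not give an inequality in the wrong direction. Since $P_i$ is nonincreasing, $A_i^-(x_i)=P_i(x_i^-)\le\frac{1}{x_i}\int_0^{x_i}P_i(t)\,\mathrm{d}t=\mu_i$, not $\ge$. This is exactly the inequality that route needs, and the paper's proof uses it. The mistaken aside does not affect the validity of your argument.
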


\begin{proof}
We claim that 
\begin{equation}\label{eq:pV_i}
 D\frac{\partial V_i}{\partial z}=-\frac{1}{c_i}\sum_{\ell=i}^k q_\ell M_\ell\qquad (i=1,\dots,k+1).
\end{equation}
Since $V_{k+1}=0$, this holds trivially for $i=k+1$. Suppose \eqref{eq:pV_i} holds for $i+1$. Then differentiating \eqref{eq:continuation-values} w.r.t.\ $z$, followed by substituting \eqref{eq:pV_i} and the identity $c_{i+1}=(1-q_i)c_i$ yields
\begin{align*}
D\frac{\partial V_i}{\partial z} 
=  (1-q_i)D\frac{\partial V_{i+1}}{\partial z} - q_i(\mu_i-V_{i+1})
= -\frac{1-q_i}{c_{i+1}}\sum_{\ell=i+1}^k q_\ell M_\ell-\frac{q_i}{c_i}M_i 
= -\frac{1}{c_{i}}\sum_{\ell=i}^k q_\ell M_\ell.
\end{align*}
This establishes \eqref{eq:pV_i}.  In particular, since $\Rev(z,x)=V_1$ and $c_1=1$, the equality in \eqref{eq:margin-sum} follows.

Denote $w(t)=(z+t,x-te_i)$. Then taking the one sided derivative (\emph{i.e.,} $t\to0^+$) gives
$$ \Rev_z(z,x)-\Rev_{x_i}^-(z,x) = \frac{d_+}{dt}\bigl(\Rev(w(t))\bigr)\bigr|_{t=0} =  -\frac{c_i}{D}\bigl(A_i^-(x_i)-V_{i+1}\bigr), $$ 
where $\Rev^-_{x_i}$ and $A^-_i$ denote the one-sided derivatives w.r.t.\ $x_i$ from below. 
Since $A_i$ is concave and $A_i(0)=0$, we have $A^-_i(x_i)\leq \frac{A_i(x_i)-A_i(0)}{x_i-0}=\mu_i$ and so
$$ \Rev_{x_i}^-(z,x)-\Rev_z(z,x) =  \frac{c_i}{D}\bigl(A_i^-(x_i)-V_{i+1}\bigr) \leq \frac{c_i}{D}\bigl(\mu_i-V_{i+1}\bigr)=\frac{1}{D}M_i. $$ 
Combining this with the first equality in \eqref{eq:pV_i} yields
$$ \sum_{\ell=1}^kq_\ell M_\ell=-D\Rev_z\leq M_i-D\Rev_{x_i}^-(z,x). $$
Since $x$ is a local maximiser, we must have $\Rev_{x_i}^-(z,x)\geq 0$, which establishes the first claim.

Lemma~\ref{lem:zderivative} gives $\Rev_z\le0$. Suppose now that the
objective value is positive but $\Rev_z=0$. Then $M_i\ge0$ for $i=1,\ldots,k$.
while \eqref{eq:margin-sum} gives $\sum_iq_iM_i=0$. Since every $q_i>0$, we obtain $M_i=0$ for every $i$.
Now $M_k=c_k\mu_k$, because $V_{k+1}=0$, so $\mu_k=0$ and hence
$V_k=0$. Working backwards, if $V_{i+1}=0$, then
$0=M_i=c_i(\mu_i-V_{i+1})=c_i\mu_i$,
so $\mu_i=0$ and therefore $V_i=0$. Thus $V_1=0$, contradicting $\Rev(z,x)=V_1>0.$
Hence $\Rev_z<0$ as claimed, which completes the proof.
\end{proof}

\begin{lemma}
\label{lem:fractional-curvature}
Let $z>0$ and $x>0$. Suppose $x_j$ lies in the interior of a
revenue tier and $\Rev_{x_j}=0$. Then 
\begin{equation}
\label{eq:curvature-margin}
 \frac{D^2}{2}\Rev_{x_jx_j}
 =
 \sum_{i=1}^kq_i\alpha_iM_i,
\end{equation}
where $\alpha\in\mathbb{R}^k$ is given by
\begin{equation}
\label{eq:alpha-def}
 \alpha_i=
 \begin{cases}
 \displaystyle
 \sum_{r=i}^{j-1}\frac{q_r}{1-q_r},
 &i<j,\\[4mm]
 0,
 &i=j,\\[2mm]
 \displaystyle
 1-\sum_{r=j+1}^{i-1}\frac{q_r}{1-q_r},
 &i>j.
 \end{cases}
\end{equation}
Moreover, if $z>0$, $k\ge2$, $x$ is a local maximum of $\Rev(z,\cdot)$ and $D\Rev_z<0$, then $\Rev_{x_jx_j}>0$.
\end{lemma}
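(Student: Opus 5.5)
The plan has two parts. First I would derive the identity \eqref{eq:curvature-margin} by direct differentiation in the single variable $x_j$. Then I would show positivity by combining the identity with the first-order information from Lemma~\ref{lem:first-order-margins}.

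\textbf{Step 1: derivatives of the $q_i$.} Since $x_j$ lies in the interior of a revenue tier with revenue $p$, say, near $x_j$ we have $A_j(t)=a+pt$. Hence $A_j''=0$ and all curvature comes from $D$ and the $q_i$. Every $x_i$ with $i\ne j$ and $z$ are held fixed. With $D'=1$ we get
\[
\partial_{x_j}q_i=-\frac{q_i}{D}\quad(i\ne j),\qquad \partial_{x_j}q_j=\frac{1-q_j}{D},\qquad \partial_{x_j}(x_j\mu_j)=p .
\]

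\textbf{Step 2: the second derivative via the recursion.} I would differentiate the recursion \eqref{eq:continuation-values} backward, exactly as in the proof of \eqref{eq:pV_i}. I would write $V_i=\bigl(A_i(x_i)+(D-x_i)V_{i+1}\bigr)/D$, so that $DV_i=A_i+(D-x_i)V_{i+1}$. This linear form in $D$ makes the $x_j$-derivatives clean:
\begin{itemize}
\item For $i\ne j$: $D\,\partial V_i+V_i=V_{i+1}+(D-x_i)\,\partial V_{i+1}$.
\item For $i=j$: $D\,\partial V_j+V_j=p+(D-x_j)\,\partial V_{j+1}$.
\end{itemize}
Differentiating once more gives a second linear recursion for $\partial^2V_i$, whose inhomogeneous terms involve $\partial V_i$ and $\partial V_{i+1}$. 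I would solve both recursions by unrolling them with the factors $c_i$, as was done for \eqref{eq:pV_i}.

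\textbf{Step 3: simplify using the margin identity.} The key rewriting is the telescoping identity
\[
V_{i+1}-V_{i+2}=\frac{q_{i+1}M_{i+1}}{c_{i+1}},
\]
which follows from \eqref{eq:continuation-values} and \eqref{eq:margin-def}. It expresses every $V$-difference through the weighted margins $q_\ell M_\ell$. After imposing $\Rev_{x_j}=0$, which cancels the term that multiplies the first derivative, the expression should collapse to $\sum_i q_i\alpha_iM_i$. The contributions split naturally:
\begin{itemize}
\item Levels $i<j$ feel the change in $D$ through their survival factors, producing the coefficients $\sum_{r=i}^{j-1}q_r/(1-q_r)$.
\item The level $i=j$ contributes nothing, because $A_j$ is linear there.
\item Levels $i>j$ pick up a contribution $1$ from the change in the reach factor $c_i$. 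This is offset by the intermediate survival terms, which give $-\sum_{r=j+1}^{i-1}q_r/(1-q_r)$.
\end{itemize}
I would keep careful bookkeeping via the identity $c_{i+1}=(1-q_i)c_i$. That identity is what produces the ratios $q_r/(1-q_r)$.

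\textbf{Step 4: positivity, and the expected obstacle.} By Lemma~\ref{lem:first-order-margins}, $M_i\ge m:=-D\Rev_z>0$ for every $i$. The difficulty is that $\alpha_i$ for $i>j$ can be negative, so the bound $M_i\ge m$ cannot be used termwise. I expect this to be the main obstacle.

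My first attempt would be an Abel summation over the indices $i>j$. I would write each $M_i$ through the recursion $M_i=c_i\mu_i-c_i\sum_{\ell>i}q_\ell M_\ell/c_\ell$, that is,
\[
\frac{M_i}{c_i}=\mu_i-\sum_{\ell>i}\frac{q_\ell M_\ell}{c_\ell}.
\]
Regrouping $\sum_{i>j}q_i\alpha_iM_i$ as a combination of the tail sums $T_s=\sum_{i\ge s}q_iM_i$ should produce nonnegative coefficients of the form $\prod(1-q_r)$. The reason to expect this is the bound $1-\sum_r q_r/(1-q_r)\ge$ (a telescoped product). Then each tail sum would be bounded using $M_i\ge m$ together with the identity $\sum_\ell q_\ell M_\ell=m$ from \eqref{eq:margin-sum}. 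In particular this identity gives $T_s\le m$, which controls the negative contributions.

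Finally, the levels $i<j$ contribute strictly positive terms $q_i\alpha_iM_i$, since $\alpha_i>0$ there. When $j=1$, the term $i=2$ has $\alpha_2=1$. In either case $k\ge2$ guarantees at least one strictly positive term, which yields the strict inequality $\Rev_{x_jx_j}>0$. If the Abel regrouping fails, my fallback would be to bound $\sum_{i>j}q_i\alpha_i$ from below by $c_{j+1}$-type products and compare it directly against $\sum_{i\le j}q_i$, using $q_0>0$.
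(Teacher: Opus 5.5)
Your Steps 1--3 follow the paper's route to \eqref{eq:curvature-margin}. You differentiate the recursion for $V_i$ twice in $x_j$, unroll it with $c_{i+1}=(1-q_i)c_i$ and $V_i-V_{i+1}=q_iM_i/c_i$, and impose $\Rev_{x_j}=0$. In the paper, $V_1'=0$ and $V_{k+1}'=0$ pin down $DV_r'$ on either side of $j$, and this is what removes the tier slope $p$. You only assert that the result ``should collapse'', but that computation does go through.

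The genuine gap is the positivity claim in Step 4, which is the substantive part of the lemma. You correctly note that $\alpha_i<0$ is possible for $i>j$, but the mechanism you propose does not handle it.
\begin{itemize}
\item There is no lower bound of $1-\sum_r q_r/(1-q_r)$ by a positive product of factors $(1-q_r)$. That quantity is $\alpha_i$ itself, which can be negative.
\item The Abel regrouping does not produce nonnegative coefficients. It gives $\sum_{i>j}q_i\alpha_iM_i=T_{j+1}-\sum_{r=j+1}^{k-1}\frac{q_r}{1-q_r}T_{r+1}$, with negative weights on the later tails.
\item The bound $T_s\le m$ that you single out cannot control those weights. Combined with $T_{j+1}\ge m\sum_{i>j}q_i$, it only yields $m\bigl(\sum_{i>j}q_i-\sum_{r=j+1}^{k-1}\frac{q_r}{1-q_r}\bigr)$. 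This is negative for $j=1$, $k=4$, $(q_1,\ldots,q_4)=(0.05,0.4,0.4,0.1)$, $q_0=0.05$, and there are no $i<j$ terms to compensate.
\item Rewriting $M_i$ through $\mu_i$ is a detour, since the $\mu_i$ carry no usable constraint.
\item Your fallback of bounding $\sum_{i>j}q_i\alpha_i$ alone does not help, because the $M_i$ cannot be factored out when the $\alpha_i$ have mixed signs.
\end{itemize}
Consequently, your closing claim that one strictly positive term suffices presupposes nonnegativity of the $i>j$ block, which you never establish.

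The missing idea is to use the first-order information in its sharp form. The paper writes $M_i=m+\delta_i$ with $\delta_i\ge0$. The identity $\sum_\ell q_\ell M_\ell=m$ then forces $\sum_iq_i\delta_i=q_0m$. Hence $\sum_iq_i\alpha_i\delta_i\ge q_0m\,\alpha_*$, where $\alpha_*=\min_i\alpha_i$. Positivity reduces to $\sum_iq_i\alpha_i+q_0\alpha_*>0$. This is immediate if $\alpha_*\ge0$. If $\alpha_*=\alpha_k<0$, the quantity equals $q_0+q_k+\sum_{r<j}\frac{q_r}{1-q_r}\sum_{i\le r}q_i+\sum_{r=j+1}^{k-1}\frac{q_r}{1-q_r}\sum_{i<r}q_i$, which is positive.

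Your tail-sum route can be repaired in the same spirit. Use $T_{r+1}=m-\sum_{i\le r}q_iM_i\le m\bigl(1-\sum_{i\le r}q_i\bigr)$ instead of $T_{r+1}\le m$. Since $\frac{q_r}{1-q_r}\bigl(1-\sum_{i\le r}q_i\bigr)=q_r-\frac{q_r}{1-q_r}\sum_{i<r}q_i$, the block $i>j$ is then at least $m\bigl(q_k+\sum_{r=j+1}^{k-1}\frac{q_r}{1-q_r}\sum_{i<r}q_i\bigr)>0$ whenever $j<k$. With that bound in place, your final paragraph goes through.
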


\begin{proof}
Throughout this proof, we use prime to denote derivatives with respect to $x_j$. We claim that
\begin{equation}
\label{eq:V-xj-general}
 DV_r'
 =
 \begin{cases}
 \displaystyle
 -\frac{1}{c_r}\sum_{\ell=r}^k q_\ell M_\ell,
 &r>j,\\[3mm]
 \displaystyle
 \frac{1}{c_r}\sum_{\ell=1}^{r-1} q_\ell M_\ell,
 &r\le j.
 \end{cases}
\end{equation} 
To see this, differentiating both sides of the identity $V_i=q_i\mu_i+(1-q_i)V_{i+1}$ yields
\begin{equation}\label{eq:1st diff full}
 V_i' = q_i'(\mu_i-V_{i+1})+(1-q_i)V_{i+1}'+q_i\mu_i'.
\end{equation}
In particular, since $Dq_i'=-q_i$ and $D\mu_i'=0$ for all $i\neq j$, this implies
\begin{equation}\label{eq:1st diff}
 V_i' = -\frac{q_i}{D}(\mu_i-V_{i+1})+(1-q_i)V_{i+1}' \quad\forall i\neq j.
\end{equation}
Then, by using definitions of $c_i$ and $M_i$ and rearranging, \eqref{eq:1st diff} becomes
\begin{equation}\label{eq:1st order recursion}
c_{i+1}DV_{i+1}'-c_{i}DV_i'=q_iM_i\quad\forall i\neq j. 
\end{equation}
For $r\leq j$, \eqref{eq:V-xj-general} follows from \eqref{eq:1st order recursion} by using $V_1'=\Rev_{x_j}=0$ and $c_1=1$. And, for $r>j$, \eqref{eq:V-xj-general} follows from \eqref{eq:1st order recursion} by using $V_{k+1}'=0$.

Next, since $D^2(-q_i/D)'=2q_i$ for all $i\neq j$, differentiating both sides of \eqref{eq:1st diff} followed by substituting \eqref{eq:1st diff} yields
\begin{equation}
\label{eq:V-xjxj-general-1}
 D^2V_i'' = 2D(V_{i+1}'-V_i')+(1-q_i)D^2V_{i+1}''\quad\forall i\neq j.
\end{equation}
For $i=j$, a similar calculation using $Dq_j'=1-q_j$, $D^2q_j''=-2(1-q_j)$ and $x_j\mu_j''=-2\mu_j'$  after differentiating both sides of \eqref{eq:1st diff full} gives
\begin{equation}
\label{eq:V-xjxj-general-2}
 D^2V_j'' = 2D(V_{j+1}'-V_j')-2DV_{j+1}'+(1-q_j)D^2V_{j+1}''.
\end{equation}
Combining \eqref{eq:V-xjxj-general-1} and \eqref{eq:V-xjxj-general-2} with $V_1'=V_{k+1}'=V_{k+1}''=0$  gives
\begin{equation}\label{eq:V1''}
\begin{aligned}
  \frac{D^2}{2}V_1'' 
  = \sum_{i=1}^kc_iD(V_{i+1}'-V_i') - c_jDV_{j+1}' + \frac{c_{k+1}}{2}D^2V_{k+1}''
  = \sum_{i=1}^{k-1}c_i q_i DV_{i+1}' - c_jDV_{j+1}'.
\end{aligned}
\end{equation}
Then, substituting the identity $c_i/c_{i+1}=1/(1-q_i)$ and \eqref{eq:V-xj-general} into \eqref{eq:V1''}, yields
\begin{align*}
     \frac{D^2}{2}\Rev_{x_jx_j} 
     &= \sum_{i=1}^{j-1}\frac{q_i}{1-q_i} \sum_{\ell=1}^{i}q_\ell M_\ell - \sum_{i=j+1}^{k-1}\frac{q_i}{1-q_i} \sum_{\ell=i+1}^kq_\ell M_\ell + \sum_{\ell=j+1}^kq_\ell M_\ell = \sum_{i=1}^kq_i\alpha_iM_i,
 \end{align*}
where $\alpha\in\mathbb{R}^k$ is given by \eqref{eq:alpha-def}. This establishes \eqref{eq:curvature-margin}.

Moreover, suppose  $z>0$, $k\ge2$, $x$ is a local maximum of $\Rev(z,\cdot)$ and $D\Rev_z<0$.  Then Lemma~\ref{lem:first-order-margins} gives $M_i\geq \sum_{i=1}^kq_iM_i=-D\Rev_z>0$ for $i=1,\dots,k$. Denote $\delta_i
 :=
 M_i-\sum_{\ell=1}^kq_\ell M_\ell
 \ge0$ and $\alpha_*:=\min_i\alpha_i$. Since $q_i\geq 0$ and $\sum_iq_i=1-q_0$, we have
\[
 \sum_iq_i\alpha_i\delta_i \geq \alpha_*\sum_iq_i\delta_i = 
 \alpha_*q_0\sum_{\ell=1}^kq_\ell M_\ell.
\]
Consequently, we obtain
\begin{align*}
 \frac{D^2}{2}\Rev_{x_jx_j}=\sum_iq_i\alpha_iM_i
 =
 \left(\sum_\ell q_\ell M_\ell\right)\left(\sum_iq_i\alpha_i\right)
 +\sum_iq_i\alpha_i\delta_i 
 \geq
 \left(\sum_\ell q_\ell M_\ell\right)
 \left(\sum_iq_i\alpha_i+q_0\alpha_*\right).
\end{align*}
Thus to establish positivity of $\Rev_{x_jx_j}$, it suffices to show $\sum_iq_i\alpha_i+q_0\alpha_*>0$.

To do so, we consider two cases based on the sign of $\alpha_*$. If $\alpha_*\geq 0$, then \eqref{eq:alpha-def} implies $\alpha_{j+1}=1$ if $j<k$ and $\alpha_{j-1}=q_{j-1}/(1-q_{j-1})$ if $j=k>1$. In both cases, $\alpha_r>0$ for some index $r$. Hence $\sum_{i}q_i\alpha_i>0$ and positivity follows. Else, if $\alpha_*<0$, then by \eqref{eq:alpha-def} we must have $\alpha_k=\alpha_*$. In this case, we use the definition of $\alpha$ to 
\begin{align*}
 \sum_iq_i\alpha_i+q_0\alpha_*
 &= \sum_{i<j}q_i\alpha_i+\sum_{i>j}q_i\alpha_i+q_0\alpha_k\\
 &= q_0+q_k
+
\sum_{r<j}
\frac{q_r}{1-q_r}\sum_{i\le r}q_i
+
\sum_{r=j+1}^{k-1}
\frac{q_r}{1-q_r}\sum_{i<r}q_i>0,
\end{align*}
where strict positivity follows because every term is nonnegative and $q_0=z/D>0$. This establishes the claim and completes the proof. 
\end{proof}

Given a feasible point $x=(x_1,\dots,x_k)$ for the refined assortment problem, we say level $i$ of $x$ is \emph{active} if $x_i>0$ and \emph{inactive} if $x_i=0$.
\begin{proposition}
\label{prop:no-unique-fractional}
Let $z>0$ and $x\geq 0$. Suppose $x$ is local maximizer of $\Rev(z,\cdot)$ with at least two active levels and $\Rev(z,x)>0$. Then $x$ is utility-integral.
\end{proposition}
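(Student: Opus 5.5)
The plan is to argue by contradiction, reducing to the situation covered by Lemmas~\ref{lem:first-order-margins} and~\ref{lem:fractional-curvature}. Suppose $x$ satisfies the hypotheses of Proposition~\ref{prop:no-unique-fractional} but some coordinate $x_j\notin\cB_j$. Every inactive coordinate $x_i=0$ lies in $\cB_i$, and so does $x_i=U_i$. Hence the offending index $j$ is active, and $0<x_j<U_j$ lies strictly inside a revenue tier. On a small interval around $x_j$ the function $A_j$ is therefore affine, so $\Rev(z,\cdot)$ is twice continuously differentiable in the $x_j$ direction near $x$. Because $x_j$ is an interior point of the feasible interval, local maximality gives the one-dimensional necessary conditions
\[
\Rev_{x_j}(z,x)=0
\qquad\text{and}\qquad
\Rev_{x_jx_j}(z,x)\le0 .
\]

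The next step handles the standing assumption $x>0$ used in Lemmas~\ref{lem:first-order-margins} and~\ref{lem:fractional-curvature}. Let $I$ be the set of active levels; by hypothesis $|I|\ge2$. An inactive level $i$ contributes nothing to $\Rev$: its numerator $A_i(0)=0$ vanishes, its survival factor is $1-0/D=1$, and it adds nothing to $D$. So, for points with $x_i=0$ for all $i\notin I$, $\Rev(z,x)$ coincides with the PALM refined revenue of the reduced instance consisting only of the levels in $I$ (in their original order, with the same $z$). The restricted point $x_I>0$ is a local maximizer of this reduced objective, because any nearby reduced point extends by zeros to a nearby feasible point of the original problem. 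The reduced instance has $k'=|I|\ge2$ levels, $z>0$, and positive value $\Rev(z,x)>0$. The derivatives with respect to $x_j$ and $z$ are the same in both formulations, since varying these quantities keeps the inactive coordinates at zero.

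In the reduced instance all hypotheses of the lemmas now hold. Lemma~\ref{lem:first-order-margins} (using $\Rev>0$) gives $D\Rev_z<0$. Lemma~\ref{lem:fractional-curvature} applies because $x_j$ is interior to a tier, $\Rev_{x_j}=0$, $k'\ge2$, $z>0$, the point is a local maximum, and $D\Rev_z<0$; it yields $\Rev_{x_jx_j}>0$. This contradicts the second-order necessary condition $\Rev_{x_jx_j}\le0$, so no fractional coordinate exists and $x$ is utility-integral.

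I expect the main obstacle to be the reduction step. One must check carefully that the lemmas, which were stated for $x>0$ in a $k$-level model, transfer verbatim to the reduced model. The points to verify are that the quantities $q_i,c_i,M_i$ are unaffected by deleting levels with $q_i=0$, and that local maximality is preserved under restriction to the face $\{x_i=0,\ i\notin I\}$. A lesser point is to justify the second-order condition, which requires genuine twice-differentiability near $x_j$; this is exactly where we use that $x_j$ lies strictly inside a tier rather than at a breakpoint.
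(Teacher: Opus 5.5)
Your proposal is correct and follows essentially the same route as the paper. You delete the inactive levels to reduce to $x>0$ and $k\ge2$, use interiority of the fractional coordinate to get $\Rev_{x_j}=0$ and $\Rev_{x_jx_j}\le0$, and contradict the latter via Lemma~\ref{lem:first-order-margins} ($\Rev_z<0$) and Lemma~\ref{lem:fractional-curvature} ($\Rev_{x_jx_j}>0$). Your extra checks (a fractional coordinate is active with $0<x_j<U_j$, local maximality survives restriction to the face of inactive levels, and $\Rev$ is smooth in $x_j$ away from breakpoints) are details the paper leaves implicit.
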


\begin{proof}
Without loss of generality, we delete inactive levels and relabel the remaining levels without changing
their order. Note that this preserves local optimality. Since there are at least two active levels, we may therefore assume that $x>0$ and $k\geq 2$. 

Suppose that $x_j$ is a fractional coordinate. Since $x_j$ lies in the interior of its current revenue tier, the first-order optimality yields $\Rev_{x_j}=0$. By Lemma~\ref{lem:first-order-margins}, we have 
$\Rev_z<0$ and so Lemma~\ref{lem:fractional-curvature} implies $\Rev_{x_jx_j}>0$. 
This contradicts the second-order necessary condition $\Rev_{x_jx_j}\le0$ for a local maximum at an interior point. We therefore conclude that $x$ has no fractional coordinates.
\end{proof}

\subsection{Proof of Theorem~\ref{thm:main}}\label{ss:main proof}

We are finally ready to present the proof of our main theorem.

\begin{proof}[Proof of Theorem~\ref{thm:main}]
First suppose $z=0$. If the zero vector is an optimal solution, then there is nothing to prove, so suppose all optima are non-zero. Let $$p_{\max}:=\max\{P_i(t):t\in[0,U_i],\,i=1,\dots,k\}$$ be the largest value of any
revenue tier so that $A_i(x_i)\le p_{\max} x_i$ for all $i=1,\dots,k$. For $x\neq 0$, we have $D=\sum_{i=1}^k x_i>0$ and $q_i=\frac{x_i}{D}$, and so
 $\sum_ic_iq_i
 =
 1-\prod_i\left(1-\frac{x_i}{D}\right)
 \le1$. 
Consequently, we obtain the upper bound
\begin{equation}\label{eq:rev_ub}
  \Rev(0,x) = \sum_{i=1}^k\frac{A_i(x_i)}{D}c_i
 \leq
 p_{\max}\sum_{i=1}^k
 q_ic_i \leq p_{\max}.
\end{equation}
Let $i^*\in\{1,\dots,k\}$ be a level whose first tier has value $p_{\max}$, and let $b_{i^*}$ be the endpoint of the tier. That is, $P_{i^*}(t)=p_{\max}$ for $t\in [0,b_{i^*}]$. Then setting $x^*=b_{i^*}{e_{i^*}}$ gives 
$$\Rev(0,x^*) = \frac{A_{i^*}(b_{i^*})}{b_{i^*}} = p_{\max}. $$
In other words, $x^*$ is a feasible point that attains the upper bound \eqref{eq:rev_ub} with $x^*_i\in\cB_i$ for $i=1,\dots,k$. Thus $x^*$ is an optimal solution of the refined problem which is utility-integral.

Now suppose $z>0$. Then the function $\Rev(z,\cdot)$ is continuous on the set of nonnegative vectors. Since the feasible region of refined assortment problem~\eqref{eq:refined-problem} is the product of closed intervals, it is a compact set. Hence existence of a global optimum $x^*$ follows by the Extreme Value Theorem. If no coordinate of $x^*$ is fractional, there is nothing to prove, so suppose there is at least one. Without loss of generality, we can delete all inactive levels and relabel the remaining active levels without changing their order. Since an inactive level has $A_i(0)=0$ and survival factor one, this leaves the objective unchanged and preserves global optimality. Let $m$ denote the number of active levels.

If the optimal value is zero, then the all-zero vector is a utility-integral optimum. Hence we need only consider the case where the optimal value is positive. If $m\ge2$, then Proposition~\ref{prop:no-unique-fractional} contradicts the existence of a fractional coordinate of $x^*$, so it remains only to consider case with $m=1$. Inside any constant-revenue tier, we have $A_1(x)=a+px$ for some $a,p\in\mathbb{R}$. Thus $\Rev(z,x)=\frac{a+px}{z+x}$ and so 
\[
 \frac{\dd}{\dd x}\Rev(z,x)
 =
 \frac{pz-a}{(z+x)^2}.
\]
Since the sign of this derivative is constant throughout the tier, 
$\Rev(z,\cdot)$ is either monotone or constant on that tier. In either case, if an interior point of the tier is optimal, then one of its endpoints is also
optimal.

Thus in every case there exists a global optimum $x^*$ with $x_i^*\in\cB_i$ for all $i=1,\dots,k$. This completes the proof.
\end{proof}

\section{Concluding remarks and future research}
\label{sec:conclusion}
We studied assortment optimization, a central problem in revenue management,
under the perception-adjusted Luce model (PALM) of
\citet{echenique2018perception}. PALM is an early sequential extension of the
Luce model that accommodates violations of regularity and choice overload.
We proved that there exists an
optimal assortment that is revenue-ordered by level: within each level, all
products at or above a level-specific revenue threshold are offered. This
resolves the conjecture of \citet{flores2019assortment} and extends their
two-level result to the general PALM. Our result reduces the
assortment decision to the choice of one threshold per level and yields a
polynomial-time algorithm when the number of levels is fixed.

Methodologically, we established this discrete structural result using
continuous optimization techniques. Our technique is a substantial departure from the algebraic proof by  \cite{flores2019assortment} for $k=2$. We considered the refined assortment problem in which the firm may reduce the utilities of the products being offered. We then used utility-preserving exchanges and first- and second-order optimality conditions to show that the refined assortment problem has an optimum that corresponds to an
ordinary assortment, so the two problems have the same optimal revenue.

We provide two research directions worthy of further study. Operational restrictions such as cardinality and shelf-space constraints are common in many businesses. These constraints may prevent the
inclusion of all products above a revenue threshold and may invalidate the use of optimality conditions used in our proof. Characterizing the structure of optimal
constrained assortments, and developing algorithms with approximation
guarantees where exact optimization is difficult would be interesting.
Finally, the exactness of the continuous refinement raises a methodological
question beyond PALM. Our proof uses the common utility denominator and the
ordered sequence of perception levels. Identifying conditions under which
a similar argument applies to other sequential choice models, or to
restricted forms of customer heterogeneity, would provide a clearer picture about the scope of our
approach. When the refinement is no longer exact, quantifying the gap
between refined and traditional assortment revenues could help assess its
usefulness for constructing bounds and approximation algorithms.

\paragraph{Acknowledgments.}
MS is supported by the ARC Laureate Project FL200100204 on ``Trustworthy AI''.
The research of MKT was supported in part by Australian Research Council grant DP230101749.

\paragraph{Declaration of AI use.}
During the preparation of this work the authors used ChatGPT to explore potential proof strategies and assist with proofreading. The authors take full responsibility for the paper's content.

\bibliographystyle{plainnat}
\bibliography{references_discrete_choice.bib}

\end{document}